\documentclass[11pt]{article}

\usepackage{amsmath,amssymb,amsthm}
\usepackage{geometry}
\usepackage{graphicx}
\usepackage{microtype}
\usepackage{mathtools}
\usepackage[hidelinks]{hyperref}

\newtheorem{theorem}{Theorem}
\newtheorem{lemma}[theorem]{Lemma}
\newtheorem{proposition}[theorem]{Proposition}

\theoremstyle{definition}
\newtheorem{definition}[theorem]{Definition}
\theoremstyle{remark}
\newtheorem{remark}[theorem]{Remark}

\newcommand{\Z}{\mathbb Z}
\newcommand{\PH}{P^{\mathrm H}}
\newcommand{\PV}{P^{\mathrm V}}
\newcommand{\boxt}{\boxtimes}
\newcommand{\closeadj}{\mathrel{\simeq}}

\title{A Recursive Construction Improving the Lower Bound on the
Shannon Capacity of $C_7$}
\author{Yu Gao}
\date{}

\begin{document}

\maketitle

\begin{abstract}
We give a recursive reformulation and extension of the independent set
of size $134753$ in $C_7^{10}$ constructed by Itty, Rosin,
Carstensen, and Reichman~\cite{itty2026}.  We prove a product lemma
that combines gadgets of different dimensions while preserving the
required independence conditions.  Starting from the size-$367$
independent set in $C_7^5$ of Polak and Schrijver~\cite{polak2019},
the construction gives an explicitly specified independent set in
$C_7^{200}$.  Consequently,
\[
\Theta(C_7)\geq
3.2587891539086910161967650155\ldots .
\]
An accompanying program verifies the finite assertions about the
five-dimensional base gadget and performs the exact integer
computations used in the recursion.
\end{abstract}

\section{Introduction}

For a finite simple graph $G=(V,E)$, let $G^{\boxt d}$ denote its
$d$-th strong power.  Its vertex set is $V^d$, and two distinct
vertices $x=(x_1,\ldots,x_d)$ and $y=(y_1,\ldots,y_d)$ are adjacent
if and only if, for every $i$, either $x_i=y_i$ or
$\{x_i,y_i\}\in E$.  Let $\alpha(G)$ denote the independence number of
$G$.  The Shannon capacity of $G$ is~\cite{shannon1956}
\[
\Theta(G)=\sup_{d\geq 1}\alpha(G^{\boxt d})^{1/d}.
\]
We identify the vertices of $C_7$ with $\Z_7$.  Thus two distinct
vectors $x,y\in\Z_7^d$ are adjacent in $C_7^{\boxt d}$ if and only if
their circular distance is at most one in every coordinate.  We write
$C_7^d$ for $C_7^{\boxt d}$ for brevity.  Polak and Schrijver
constructed an independent set of size $367$ in
$C_7^5$~\cite{polak2019}; it remains the largest currently known such
set~\cite{itty2026}.  Itty et al.~\cite{itty2026} used it to construct
an independent set of size $134753$ in $C_7^{10}$ and obtained
\[
\Theta(C_7)\geq 134753^{1/10}
=3.258020737293245\ldots .
\]

The purpose of this note is to isolate and propagate a recursive
invariant hidden in that ten-dimensional construction.  Informally,
the invariant requires that no point of an auxiliary independent set
be confusable with points in both of two transversals.
This condition is stated precisely in Definition~\ref{def:gadget}.
Our main result is the following.

\begin{theorem}\label{thm:main}
There is an independent set in $C_7^{200}$ of cardinality
\[
\resizebox{0.97\textwidth}{!}{$
M_{40}=
4085567963379990282748597333793399773399910167372462112610203714626938509773125898252337545387866277249.
$}
\]
In particular,
\[
\Theta(C_7)\geq M_{40}^{1/200}
=3.2587891539086910161967650155206769\ldots .
\]
\end{theorem}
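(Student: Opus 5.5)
The plan is to build the set in $C_7^{200}$ by iterating, forty times, a product lemma on five-dimensional gadgets, starting from the Polak--Schrijver set. The index $M_{40}$ suggests that a sequence $M_k$ for gadgets in $C_7^{5k}$ satisfies an exact integer recursion. The first task is to fix the notion of a \emph{gadget} (Definition~\ref{def:gadget}) in $C_7^d$. I would model it on the ten-dimensional construction of Itty et al. A gadget is a triple consisting of an independent set $S\subseteq \Z_7^d$, an auxiliary independent set $A$, and two transversals $\PH,\PV$. The invariant is that no point of $A$ is confusable (adjacent or equal, written $\closeadj$) with points of both $\PH$ and $\PV$. The gadget then carries a numerical size, such as $|S|$ together with $|A|$, and these sizes are what the recursion tracks.

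The core step is a product lemma. Given gadgets $(S_1,A_1,\PH_1,\PV_1)$ in $C_7^{d_1}$ and $(S_2,A_2,\PH_2,\PV_2)$ in $C_7^{d_2}$, I would form a new independent set in $C_7^{d_1+d_2}$ as a union of pieces. One piece is $S_1\times S_2$. The others are ``cross'' pieces such as $A_1\times \PH_2$ and $\PV_1\times A_2$, where a transversal in one factor is paired with the auxiliary set in the other, mirroring how $134753$ exceeds $367^2=134689$.

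Independence is checked coordinate-block by coordinate-block. Two vectors of $\Z_7^{d_1+d_2}$ are adjacent iff both blocks are equal or adjacent and not both equal. Within each piece this follows from independence of the factors. Between pieces, the invariant is exactly what excludes simultaneous confusability in both blocks. I also need to show that the product again satisfies the invariant, with explicitly defined new $A$, $\PH$, $\PV$. This is what makes the construction recursive, and it yields a recursion expressing the new sizes polynomially in the old ones.

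Next, verify that the Polak--Schrijver $367$-set in $C_7^5$, together with a suitable auxiliary set and two transversals, forms a base gadget. This is a finite check done by the accompanying program, and I would just cite it. Then I would apply the product lemma repeatedly. The dimensions combine additively, so a chain of combinations totalling $200=40\cdot 5$ gives a gadget in $C_7^{200}$. The order of combinations may matter: gluing to the base gadget one at a time versus doubling gives different recursions. I would take whichever schedule the integer computation shows to be optimal, and compute $M_{40}$ exactly in integers. The stated capacity bound then follows immediately from $\Theta(C_7)\ge \alpha(C_7^{200})^{1/200}$.

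The main obstacle is the product lemma itself, specifically choosing the new $A$, $\PH$, $\PV$ for the product so that the invariant is preserved. Independence of the combined set is a mechanical case analysis. Heredity of the ``not confusable with both transversals'' condition is subtle, because confusability in a product holds blockwise, and the new transversals must be chosen so that each case reduces to the invariant in one factor. My first attempt would be to set $\PH=\PH_1\times\PH_2$ and $\PV=\PV_1\times\PV_2$, with $A$ assembled from $A_1\times A_2$ plus mixed terms, and then check the cases directly.
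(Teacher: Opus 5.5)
Your outline has the right skeleton (gadgets, a product lemma, a computer-checked base case, optimization over split trees), but the product you propose cannot work, so the core step is missing. Since $N(S_1\times S_2)=N(S_1)\times N(S_2)$, a vector $(u,w)$ can be added to $S_1\times S_2$ only if $u\notin N(S_1)$ or $w\notin N(S_2)$. The $367$-set is a maximal independent set of $C_7^5$: if a vertex could be added, a $368$-set would be known. Hence $N(S_1)=\Z_7^5$, and $S_1\times S_2$ is already maximal in $C_7^{10}$. No cross piece such as $A_1\times\PH_2$ can be added to it, whatever $A_1$ and $\PH_2$ are.

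The excess of $134753$ over $367^2$ comes from a trade, $134753=(367-8)^2+2\cdot 8\cdot 367$. The idea you are missing is the \emph{private pair}: a pair $(r_i,q_i)$ with $r_i\in I$, $q_i\notin I$ and $N(\{q_i\})\cap I=\{r_i\}$. The transversals $\PH,\PV$ split the endpoints of these pairs. The auxiliary set is partitioned as $X=O\cup H\cup V$ with $H=X\cap N(\PH)$ and $V=X\cap N(\PV)$. One deletes the centers, putting $B_i=I_i\setminus R_i$, and takes
\[
I_{12}=(B_1\times B_2)\cup\{(p,x):x\in X_2,\ p\in L_1(x)\}\cup\{(y,q):y\in X_1,\ q\in K_2(y)\}.
\]
Here $L_1(x)$ is $R_1$, $\PH_1$, $\PV_1$ according as $x\in O_2$, $H_2$, $V_2$. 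Likewise $K_2(y)$ is $R_2$, $\PV_2$, $\PH_2$ according as $y\in O_1$, $H_1$, $V_1$; note the swap. Privacy, together with independence of $I_i$, gives $B_i\cap N(R_i\cup Q_i)=\emptyset$. That is what lets the cross pieces coexist with $B_1\times B_2$. Your invariant, combined with the $H/V$ swap, is what keeps the two cross pieces apart. The result is $|I_{12}|=(a_1-t_1)(a_2-t_2)+t_1s_2+s_1t_2$.

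The heredity step must also be redone. The new private pairs are $((r_i,x),(q_i,x))$ with $x\in O_2$ and $((y,r_j),(y,q_j))$ with $y\in O_1$. So the new transversals are $(\PH_1\times O_2)\cup(O_1\times\PV_2)$ and $(\PV_1\times O_2)\cup(O_1\times\PH_2)$. Your candidate $\PH_1\times\PH_2$ contains no endpoint of any new pair. With these choices, $X_{12}=X_1\times X_2$ works with no mixed terms.

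However, the recursion must track the full tuple $(a,t,s,o,h,v)$, not just $|S|$ and $|A|$:
\[
t_{12}=t_1o_2+o_1t_2,\quad s_{12}=s_1s_2,\quad o_{12}=o_1o_2+(h_1+v_1)(h_2+v_2),
\]
\[
h_{12}=h_1o_2+o_1v_2,\quad v_{12}=v_1o_2+o_1h_2.
\]
Because $2o-s$ is multiplicative, $s,o,t$ depend only on the number $k$ of base blocks. This is what makes the maximization over splits well defined. Start from the explicit base data, with tuple $(367,8,367,321,26,20)$, and use the split tree $1+1,\ 1+2,\ 2+3,\ 5+5,\ 10+10,\ 20+20$; this yields exactly $M_{40}$. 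Without this parameter bookkeeping, the specific integer in the statement cannot be derived.
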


The subscript $40$ records that the construction uses forty
five-dimensional base gadgets.  The proof of
Theorem~\ref{thm:main} is given in Sections~\ref{sec:gadgets}
through~\ref{sec:iteration}.

\section{Gadgets}\label{sec:gadgets}

For vertices $u,v$ of a graph $G$, write
\[
u\closeadj v
\quad\Longleftrightarrow\quad
u=v\ \text{or}\ \{u,v\}\in E(G).
\] We say $u$ and $v$ are confusable in this case.
For $S\subseteq V(G)$, put
\[
N(S)=\{x\in V(G):x\closeadj s
    \text{ for some }s\in S\}.
\]
Thus $N(S)$ is the closed neighborhood of $S$.

\begin{definition}[Private pair]
Let $I$ be an independent set in $G$.  A pair $(r,q)$ is a \textit{private pair} for
$I$ if $r\in I$, $q\notin I$, and
\[
N(\{q\})\cap I=\{r\}.
\]
The point $r$ is the \textit{center}, and $q$ is its
\textit{private neighbor}; both are called endpoints of the private
pair.
\end{definition}

\begin{definition}[Gadget]\label{def:gadget}
A gadget in $G$ consists of the following data.
\begin{enumerate}
  \item An independent set $I\subseteq V(G)$ of size $a$.  We call
  $I$ the \textit{code} of the gadget.
  \item Private pairs with pairwise disjoint endpoint sets,
  \[
  (r_i,q_i),\qquad 1\leq i\leq t,
  \]
  and hence with distinct centers.
  \item Two complementary transversals $\PH,\PV$ of the pair
  endpoints: for every $i$, one of $r_i,q_i$ belongs to $\PH$ and the
  other belongs to $\PV$.  Both $\PH$ and $\PV$ are independent.
  \item An independent auxiliary set $X$ of $G$, such that
  \[
  X\cap N(\PH)\cap N(\PV)=\emptyset.
  \]
\end{enumerate}
Write
\[
\begin{aligned}
O&=X\setminus\bigl(N(\PH)\cup N(\PV)\bigr),\\
H&=X\cap N(\PH),\\
V&=X\cap N(\PV),
\end{aligned}
\]
and set
\[
(s,o,h,v)=(|X|,|O|,|H|,|V|).
\]
Thus $X=O\mathbin{\dot\cup}H\mathbin{\dot\cup}V$ and
$s=o+h+v$.
We call $(a,t,s,o,h,v)$ the parameter tuple of the gadget.
\end{definition}

The condition in Definition~\ref{def:gadget} also implies that $X$ is
disjoint from all private-pair endpoints.  Indeed, the two endpoints
of each private pair are confusable and lie in different
transversals.  Each endpoint therefore belongs to both
$N(\PH)$ and $N(\PV)$.

Let
\[
R=\{r_1,\ldots,r_t\},
\qquad
B=I\setminus R,
\qquad
Q=\{q_1,\ldots, q_t\}.
\]
The private-pair condition and the independence of $I$ imply
\begin{equation}\label{eq:B-avoids-endpoints}
B\cap N(R\cup Q)=\emptyset.
\end{equation}

\section{The product lemma}\label{sec:product}

We now combine two not necessarily identical gadgets.  Subscripts
$1,2$ refer to gadgets in graphs $G_1,G_2$, respectively.
For vertices of the strong product, closed adjacency is
coordinatewise:
\begin{equation}\label{eq:coordinate-confusability}
(x_1,x_2)\closeadj(y_1,y_2)
\quad\Longleftrightarrow\quad
x_1\closeadj y_1\ \text{and}\ x_2\closeadj y_2.
\end{equation}

Define the set-valued map
\[
L_1:X_2\longrightarrow 2^{V(G_1)}
\]
by
\[
L_1(x)=
\begin{cases}
R_1,&x\in O_2,\\
\PH_1,&x\in H_2,\\
\PV_1,&x\in V_2.
\end{cases}
\]
Define the set-valued map
\[
K_2:X_1\longrightarrow 2^{V(G_2)}
\]
by
\[
K_2(y)=
\begin{cases}
R_2,&y\in O_1,\\
\PV_2,&y\in H_1,\\
\PH_2,&y\in V_1.
\end{cases}
\]

\begin{lemma}\label{lem:product}
Let $\mathcal G_1,\mathcal G_2$ be gadgets with parameters
\[
(a_i,t_i,s_i,o_i,h_i,v_i),\qquad i=1,2.
\]
Define
\begin{align}
A_H&=\{(p,x):x\in X_2,\ p\in L_1(x)\},\label{eq:AH}\\
A_V&=\{(y,q):y\in X_1,\ q\in K_2(y)\},\label{eq:AV}
\end{align}
and
\begin{equation}\label{eq:product-code}
I_{12}=(B_1\times B_2)\cup A_H\cup A_V.
\end{equation}
Then $I_{12}$ is an independent set in $G_1\boxt G_2$ and
\begin{equation}\label{eq:a-product}
a_{12}=|I_{12}|
=(a_1-t_1)(a_2-t_2)+t_1s_2+s_1t_2.
\end{equation}

Moreover, $I_{12}$ is the code of a gadget $\mathcal G_{12}$ with
parameter tuple
$(a_{12},t_{12},s_{12},o_{12},h_{12},v_{12})$, where
\begin{align}
t_{12}&=t_1o_2+o_1t_2,\label{eq:t-product}\\
s_{12}&=s_1s_2,\label{eq:s-product}\\
o_{12}&=o_1o_2+(h_1+v_1)(h_2+v_2),\label{eq:o-product}\\
h_{12}&=h_1o_2+o_1v_2,\label{eq:h-product}\\
v_{12}&=v_1o_2+o_1h_2.\label{eq:v-product}
\end{align}
\end{lemma}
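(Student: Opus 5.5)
We verify the three parts of the statement directly from Definition~\ref{def:gadget}, \eqref{eq:B-avoids-endpoints} and \eqref{eq:coordinate-confusability}: that $I_{12}$ is independent, its cardinality, and the explicit gadget structure on it. Throughout we use three facts. First, $X_i$ is disjoint from all private-pair endpoints, and every endpoint lies in $N(\PH_i)\cap N(\PV_i)$. Second, $O_i$ is disjoint from $N(R_i\cup Q_i)$, because $R_i\cup Q_i=\PH_i\cup\PV_i$. Third, since $X_i$ is independent, two elements of $X_i$ are confusable only if they are equal. Note also that every value of $L_1$ lies in $R_1\cup Q_1$ and every value of $K_2$ lies in $R_2\cup Q_2$.

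\textbf{Independence and size.} Take two distinct confusable elements of $I_{12}$; we rule out each pair of blocks.
\begin{itemize}
\item $B_1\times B_2$ against itself: impossible, since $B_1$ and $B_2$ are independent.
\item $B_1\times B_2$ against $(p,x)\in A_H$: we would need $b_1\closeadj p$ with $p\in R_1\cup Q_1$, which \eqref{eq:B-avoids-endpoints} forbids. The case $B_1\times B_2$ against $A_V$ is symmetric in the second coordinate.
\item $A_H$ against itself: $x\closeadj x'$ forces $x=x'$, so both first coordinates lie in the single set $L_1(x)\in\{R_1,\PH_1,\PV_1\}$. Each of these is independent, so the two elements coincide. $A_V$ against itself is symmetric.
\item $(p,x)\in A_H$ against $(y,q)\in A_V$: this is the case where the specific choices of $L_1,K_2$ matter, and we expect it to be the main obstacle. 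If $y\in O_1$, then $y$ is not confusable with the endpoint $p$. If $x\in O_2$, then $x$ is not confusable with the endpoint $q$. Otherwise suppose $x\in H_2$. Then $p\in\PH_1$, and $y\closeadj p$ puts $y\in N(\PH_1)$, so $y\in H_1$. Hence $q\in\PV_2$, and $x\closeadj q$ would put $x\in H_2\cap N(\PV_2)=\emptyset$. The case $x\in V_2$ is symmetric.
\end{itemize}

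For the size, the three blocks are pairwise disjoint. The first coordinates of $A_H$ are endpoints, those of $B_1\times B_2$ avoid the endpoints, and those of $A_V$ lie in $X_1$, which also avoids the endpoints. Counting gives $|A_H|=\sum_{x\in X_2}|L_1(x)|=t_1s_2$ and $|A_V|=s_1t_2$, which yields \eqref{eq:a-product}.

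\textbf{Gadget structure.} We take the following private pairs:
\begin{itemize}
\item $\bigl((r_{1,i},x),(q_{1,i},x)\bigr)$ for $x\in O_2$;
\item $\bigl((y,r_{2,j}),(y,q_{2,j})\bigr)$ for $y\in O_1$.
\end{itemize}
This gives $t_1o_2+o_1t_2$ pairs, with disjoint endpoints by the same coordinate argument as above. The centers lie in $A_H$ because $L_1(x)=R_1$ on $O_2$, and in $A_V$ because $K_2(y)=R_2$ on $O_1$. To check privacy, consider $(q_{1,i},x)$:
\begin{itemize}
\item $B_1$ avoids $N(q_{1,i})$;
\item in $A_H$, the second coordinate forces $x'=x$, and then privacy in $\mathcal G_1$ forces $p=r_{1,i}$;
\item in $A_V$, we would need $x\closeadj q$ with $q$ an endpoint of $\mathcal G_2$, which is impossible since $x\in O_2$.
\end{itemize}
The transversals are
\[
\PH_{12}=(\PH_1\times O_2)\cup(O_1\times\PV_2),\qquad
\PV_{12}=(\PV_1\times O_2)\cup(O_1\times\PH_2),
\]
and the auxiliary set is $X_{12}=X_1\times X_2$. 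Both are independent: within each piece this follows from the factors, and across pieces an element of $O_2$ is never confusable with an endpoint of $\mathcal G_2$. Also $X_{12}$ is independent because $X_1$ and $X_2$ are.

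It remains to identify $O_{12},H_{12},V_{12}$ and to verify $X_{12}\cap N(\PH_{12})\cap N(\PV_{12})=\emptyset$. Take $(y,x)\in X_{12}$.
\begin{itemize}
\item Confusability of $(y,x)$ with an element $(p,x')\in\PH_1\times O_2$ forces $x=x'\in O_2$ and $y\in H_1$.
\item Confusability with an element $(y',q)\in O_1\times\PV_2$ forces $y=y'\in O_1$ and $x\in V_2$.
\end{itemize}
Hence $X_{12}\cap N(\PH_{12})=(H_1\times O_2)\cup(O_1\times V_2)$, and symmetrically $X_{12}\cap N(\PV_{12})=(V_1\times O_2)\cup(O_1\times H_2)$. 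These two sets are disjoint because $O_i,H_i,V_i$ partition $X_i$. Their complement in $X_{12}$ is $(O_1\times O_2)\cup\bigl((H_1\cup V_1)\times(H_2\cup V_2)\bigr)$. Reading off cardinalities gives \eqref{eq:s-product}--\eqref{eq:v-product}.
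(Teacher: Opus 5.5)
Your proof is correct and follows the paper's argument essentially step for step: the same block decomposition of $I_{12}$, the same case split for the $A_H$ versus $A_V$ comparison, the same private pairs, the same transversals, and $X_{12}=X_1\times X_2$, with the slight streamlining that you use directly that $O_i$ avoids every endpoint of $\mathcal G_i$. One small slip is in the disjointness sentence: first coordinates alone do not separate $B_1\times B_2$ from $A_V$, so use the second coordinate instead ($B_2$ avoids the endpoints of $\mathcal G_2$, while $K_2(y)$ consists of endpoints), as the paper does; this also follows from your closed-confusability check in the independence step.
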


\begin{proof}
We first prove that $I_{12}$ from \eqref{eq:product-code} is independent.
$B_1\times B_2$ is independent.  No point of it is confusable with a
point of either $A_H$ or $A_V$, by
\eqref{eq:B-avoids-endpoints} and
\eqref{eq:coordinate-confusability}.

For a fixed $x\in X_2$, the set $L_1(x)$ is one of
$R_1,\PH_1,\PV_1$ and is therefore independent.  If $x,x'\in X_2$
are distinct, then $x\not\closeadj x'$ because $X_2$ is independent.
Consequently $A_H$ is independent.  The same
argument applies to $A_V$.

It remains to compare a point $(p,x)\in A_H$ with a point
$(y,q)\in A_V$.  Suppose first that $x\in H_2$.  Then
$p\in\PH_1$.  If $p\closeadj y$, then
$y\in H_1=X_1\cap N(\PH_1)$.  Hence $q\in\PV_2$, whereas
$x\in H_2$ is not confusable with any point of $\PV_2$.  Thus
$x\not\closeadj q$.  The case $x\in V_2$ is symmetric.

Finally suppose that $x\in O_2$.  Then $p\in R_1$.  Every $r_i$ lies
in exactly one of $\PH_1,\PV_1$.  If $p\closeadj y$, then $y$ is in
$H_1$ or $V_1$.  The set $K_2(y)$ is either $\PV_2$ or $\PH_2$, while
$x\in O_2$ is confusable with no point in either transversal.  Again
$x\not\closeadj q$.  Therefore no point of $A_H$ is confusable with a
point of $A_V$.

The three parts of \eqref{eq:product-code} are disjoint.  $B_1\times B_2$ is
disjoint from $A_H$ because $L_1(x)\cap B_1=\emptyset$ for every
$x\in X_2$, and it is disjoint from $A_V$ because
$K_2(y)\cap B_2=\emptyset$ for every $y\in X_1$.  An equality between
a point of $A_H$ and a point of $A_V$ would force a point of some
$X_i$ to be a private-pair endpoint, which the observation following
Definition~\ref{def:gadget} excludes.  Since each of
$R_i,\PH_i,\PV_i$ has size $t_i$, it follows that
$|A_H|=t_1s_2$ and $|A_V|=s_1t_2$.  This proves both independence and
the cardinality formula \eqref{eq:a-product}.

We next construct $\mathcal G_{12}$.  We have proved that its code
$I_{12}$ is independent.

For every private pair $(r_i,q_i)$ of $\mathcal G_1$ and every
$x\in O_2$, include the following private pair of $\mathcal G_{12}$:
\[
\bigl((r_i,x),(q_i,x)\bigr).
\]
For every $y\in O_1$ and every private pair $(r_j,q_j)$ of
$\mathcal G_2$, include the following private pair of
$\mathcal G_{12}$:
\[
\bigl((y,r_j),(y,q_j)\bigr).
\]
We verify the first family; the second is symmetric.  The center
$(r_i,x)$ belongs to $A_H$, because $x\in O_2$ and
$L_1(x)=R_1$.  Suppose that $(q_i,x)$ is confusable with
$(p,x')\in A_H$.  Since $x,x'\in X_2$ and $X_2$ is independent,
\eqref{eq:coordinate-confusability} implies $x'=x$.  Hence
$p\in R_1\subseteq I_1$, and the privacy of $(r_i,q_i)$ for $I_1$
gives $p=r_i$.  Thus $(q_i,x)$ is confusable with no point of $A_H$
other than $(r_i,x)$.  It is confusable with no point of
$B_1\times B_2$ by \eqref{eq:B-avoids-endpoints}.

It remains to exclude points of $A_V$.  If $q_i\closeadj y$ for some
$y\in X_1$, then $y\in H_1$ when $q_i\in\PH_1$, and $y\in V_1$ when
$q_i\in\PV_1$.  In the first case $K_2(y)=\PV_2$, and in the second
$K_2(y)=\PH_2$.  Because $x\in O_2$, it is confusable with no point
of either transversal.  Therefore $(q_i,x)$ is confusable with no
point of $A_V$.  This proves that
$\bigl((r_i,x),(q_i,x)\bigr)$ is a private pair for $I_{12}$.

Within the first family, pairs indexed by distinct
$(i,x)\in\{1,\ldots,t_1\}\times O_2$ have disjoint endpoint sets.
Within the second family, pairs indexed by distinct
$(y,j)\in O_1\times\{1,\ldots,t_2\}$ have disjoint endpoint sets.  The two families are disjoint from each other because every
$O_i\subseteq X_i$ is disjoint from the private-pair endpoints of
$\mathcal G_i$.  This proves \eqref{eq:t-product}.

Define the transversals of $\mathcal G_{12}$ by
\begin{align}
\PH_{12}
&=(\PH_1\times O_2)\cup(O_1\times\PV_2),\label{eq:new-PH}\\
\PV_{12}
&=(\PV_1\times O_2)\cup(O_1\times\PH_2).\label{eq:new-PV}
\end{align}
They are complementary transversals of the private pairs of
$\mathcal G_{12}$.  Each is
independent: its two pieces are independent products, and a
cross-conflict is excluded because every point in $O_i$ is
confusable with no private-pair endpoint of $\mathcal G_i$.

Take $X_{12}=X_1\times X_2$.  It is independent and disjoint from the
private-pair endpoints of $\mathcal G_{12}$.  Since $X_i$ is
independent, a point of $X_i$ is confusable with some point of
$O_i\subseteq X_i$ if and only if it itself belongs to $O_i$.
Using \eqref{eq:new-PH} and \eqref{eq:new-PV}, we obtain the disjoint
unions
\begin{align*}
O_{12}
&=(O_1\times O_2)
  \mathbin{\dot\cup}
  \bigl((H_1\cup V_1)\times(H_2\cup V_2)\bigr),\\
H_{12}
&=(H_1\times O_2)
  \mathbin{\dot\cup}
  (O_1\times V_2),\\
V_{12}
&=(V_1\times O_2)
  \mathbin{\dot\cup}
  (O_1\times H_2).
\end{align*}
In particular, $H_{12}\cap V_{12}=\emptyset$.  Taking cardinalities
gives
\eqref{eq:s-product}--\eqref{eq:v-product}.
\end{proof}

\section{The five-dimensional base gadget}\label{sec:base}

Let $I_0\subseteq\Z_7^5$ be the explicit independent set of size $367$
used in~\cite{polak2019,itty2026}.  We take $I=I_0$ as the code of the
base gadget.  As in~\cite{itty2026}, define the following eight pairs:
\[
\begin{array}{c|c|c}
j&r_j&q_j\\ \hline
0&(1,3,4,4,6)&(2,3,5,4,6)\\
1&(3,4,0,3,5)&(2,4,6,3,5)\\
2&(5,3,1,3,4)&(5,3,2,3,5)\\
3&(4,4,6,1,6)&(5,4,6,0,6)\\
4&(6,0,6,4,5)&(6,1,6,5,5)\\
5&(0,3,5,6,5)&(6,3,5,0,5)\\
6&(6,4,3,4,0)&(6,4,2,4,6)\\
7&(6,4,5,3,2)&(6,5,5,3,1)
\end{array}
\]
with all coordinates taken modulo $7$.  Put
\[
J_0=\{0,5,6\},\qquad J_1=\{1,2,3,4,7\},
\]
and
\begin{align*}
\PH&=\{r_j:j\in J_0\}\cup\{q_j:j\in J_1\},\\
\PV&=\{q_j:j\in J_0\}\cup\{r_j:j\in J_1\}.
\end{align*}

For $w=(w_0,\ldots,w_4)\in\Z_7^5$, define
\[
T(w)=(2-w_1,w_3,w_0,2-w_2,w_4).
\]
The map $T$ is an automorphism of $C_7^5$.  Let
$X_0=T(I_0)$ and set
\[
X=\bigl(X_0\setminus\{(2,4,6,3,5)\}\bigr)
  \cup\{(1,5,6,3,5)\}.
\]

\begin{proposition}[Base gadget]\label{prop:base}
The data above form a gadget in $C_7^5$ with
\[
(a,t,s,o,h,v)=(367,8,367,321,26,20).
\]
\end{proposition}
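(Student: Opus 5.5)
This is a finite verification. We check each clause of Definition~\ref{def:gadget} for the explicit data, with the mechanical checks delegated to the accompanying program. Where possible we reduce them to structural facts, so that the computer only has to confirm small lists.

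\emph{Step 1 (code and private pairs).} The code $I=I_0$ is independent with $a=367$; this is the Polak--Schrijver result, and we re-verify it by checking all $\binom{367}{2}$ pairs for some coordinate at circular distance at least $2$. For each $j$ we then check four things: $r_j\in I_0$, $q_j\notin I_0$, $r_j\closeadj q_j$ (each table row differs by at most one in each coordinate), and $N(\{q_j\})\cap I_0=\{r_j\}$. The last check means enumerating the $3^5=243$ closed neighbours of $q_j$ and intersecting with $I_0$. The sixteen endpoints are visibly distinct, so the endpoint sets are disjoint and $t=8$.

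\emph{Step 2 (transversals).} By construction, $\PH$ and $\PV$ each contain exactly one endpoint of every pair. Their independence is a check of $\binom82$ pairs each. Two facts help here. Centers are pairwise non-confusable because $I_0$ is independent. A center $r_i$ is never confusable with $q_j$ for $j\neq i$, by privacy. So the only pairs that actually need checking are pairs of private neighbours $q_i,q_j$ lying in the same transversal: those with $i,j\in J_1$ in $\PH$, and those with $i,j\in J_0$ in $\PV$.

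\emph{Step 3 (the auxiliary set).} $T$ is a composition of coordinate permutations and reflections $w\mapsto 2-w$, each of which preserves circular distance. Hence $T$ is an automorphism and $X_0=T(I_0)$ is independent of size $367$. We check two things about the swap:
\begin{itemize}
\item $(2,4,6,3,5)\in X_0$ (note that this is $q_1$);
\item $(1,5,6,3,5)\notin X_0$, and it is confusable with no point of $X_0\setminus\{(2,4,6,3,5)\}$.
\end{itemize}
This makes $X$ independent with $s=367$.

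\emph{Step 4 (key condition and counts).} For each $x\in X$ we compute whether $x\in N(\PH)$ and whether $x\in N(\PV)$. Each test is $8$ coordinatewise comparisons. We confirm that no $x$ lies in both, and we record the counts $h=|X\cap N(\PH)|=26$, $v=|X\cap N(\PV)|=20$ and $o=367-46=321$.

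\emph{Main obstacle.} The hard step is Step 4: the condition $X\cap N(\PH)\cap N(\PV)=\emptyset$ is genuinely nonstructural. The swap in Step 3 is presumably there precisely because $q_1=(2,4,6,3,5)\in X_0$ is an endpoint, and therefore lies in both neighbourhoods. So the plan is to verify Step 4 exhaustively by program and to single out only the finitely many points of $X$ near endpoints for a human-readable table. This sparsity holds because at most $16\cdot 3^5$ vectors can be confusable with an endpoint. The key structural remark is that $x$ is confusable with endpoints of $\PH$ only via at most a few indices $j$. The $J_0/J_1$ split is exactly the choice that makes no $x$ touch both sides.
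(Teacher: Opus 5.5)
Your proposal is correct and matches the paper's proof: both reduce the proposition to the finite checks in Definition~\ref{def:gadget}, namely independence of $I_0$ and $X$, privacy of the eight pairs, independence of $\PH,\PV$, and the classification of $X$ against $N(\PH)$ and $N(\PV)$. Your shortcuts (using privacy in Step 2 and the automorphism $T$ in Step 3) are valid, and the heuristic remarks in your last paragraph carry no weight in the argument, which is fortunate because the bound $16\cdot 3^5=3888$ does not imply any sparsity relative to $|X|=367$.
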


\begin{proof}
Each assertion is a finite check.  The verification program described
in Section~\ref{sec:verification} checks the following directly from
the $367$ vectors of $I_0$:
\begin{enumerate}
  \item $I_0$ and $X$ are independent and each has size $367$;
  \item for every $j$,
  \[
  N(\{q_j\})\cap I_0=\{r_j\};
  \]
  \item $\PH$ and $\PV$ are independent;
  \item $X$ is disjoint from the sixteen pair endpoints;
  \item among the points of $X$, exactly $321$ are confusable with
  neither $\PH$ nor $\PV$, exactly $26$ are confusable with $\PH$
  only, exactly $20$ are confusable with $\PV$ only, and none is
  confusable with both.
\end{enumerate}
These are precisely the conditions in
Definition~\ref{def:gadget}.
\end{proof}

\begin{remark}[Reproduction of the published ten-dimensional set]
Applying Lemma~\ref{lem:product} to two copies of the base gadget gives
\[
(367-8)^2+2\cdot8\cdot367=134753.
\]
Under the notation of~\cite{itty2026}, this construction reproduces
the functions denoted there by $h_j$ and $v_j$.
\end{remark}

\section{Iteration and the 200-dimensional code}\label{sec:iteration}

We count dimensions in five-dimensional blocks.  A gadget built from
$k$ base blocks lies in $C_7^{5k}$.  For gadgets constructed from
$k$ copies of the base gadget, the values of $s,o,t$ depend only on
$k$; we denote them by $s_k,o_k,t_k$.  The values of $a,h,v$ may also
depend on the chosen product tree.

\begin{lemma}\label{lem:closed-forms}
Every gadget obtained from $k$ copies of the base gadget has
\begin{equation}\label{eq:closed-forms}
s_k=367^k,\qquad
o_k=\frac{367^k+275^k}{2},\qquad
t_k=\frac{2(367^k-275^k)}{23}.
\end{equation}
\end{lemma}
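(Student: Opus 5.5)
We argue by strong induction on $k$, following the product tree. A gadget built from $k$ copies of the base gadget is either the base gadget itself ($k=1$), or, by construction, the gadget $\mathcal G_{12}$ of Lemma~\ref{lem:product} obtained from gadgets built from $k_1$ and $k_2$ copies with $k_1+k_2=k$ and $k_1,k_2\ge 1$. We therefore check the formulas for $k=1$ and show they are preserved by the recursions \eqref{eq:t-product}--\eqref{eq:v-product}. The key observation is that $s,o,t$ and the combination $u=h+v$ satisfy a closed system. Indeed, $s=o+u$, and by \eqref{eq:s-product} and \eqref{eq:o-product},
\[
s_{12}=s_1s_2,\qquad o_{12}=o_1o_2+u_1u_2 .
\]
Adding \eqref{eq:h-product} and \eqref{eq:v-product} gives $u_{12}=u_1o_2+o_1u_2$. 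Finally, $t_{12}=t_1o_2+o_1t_2$ depends only on $t$ and $o$. So the individual values $h,v$, which may depend on the tree, never enter.

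For the base case, Proposition~\ref{prop:base} gives $s_1=367$, $o_1=321$, $u_1=46$ and $t_1=8$. These agree with \eqref{eq:closed-forms}, since $(367+275)/2=321$, $367-321=46=(367-275)/2$, and $2\cdot 92/23=8$.

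For the inductive step, diagonalize. Put $\sigma=o+u$ and $\delta=o-u$. Then
\[
\sigma_{12}=(o_1+u_1)(o_2+u_2)=\sigma_1\sigma_2,\qquad \delta_{12}=(o_1-u_1)(o_2-u_2)=\delta_1\delta_2 .
\]
With base values $\sigma=367$ and $\delta=321-46=275$, induction gives $\sigma_k=367^k$ and $\delta_k=275^k$. Hence $o_k=(367^k+275^k)/2$ and $s_k=\sigma_k=367^k$; the latter also follows directly from \eqref{eq:s-product}.

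For $t$, the cleanest route is the ansatz $t=c\,u$ with $c=8/46=4/23$. Suppose $t_i=\tfrac{4}{23}u_i$ for $i=1,2$. Then
\[
t_{12}=t_1o_2+o_1t_2=\tfrac{4}{23}(u_1o_2+o_1u_2)=\tfrac{4}{23}u_{12},
\]
so the proportionality propagates. Since $u_k=(367^k-275^k)/2$, this gives $t_k=\tfrac{2(367^k-275^k)}{23}$.

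The only real obstacle is spotting the invariant combinations: $o\pm(h+v)$ are multiplicative, and $t$ is proportional to $h+v$ because the two obey the same linear recursion against $o$. Once these are identified, everything else is routine arithmetic. We should also note that the statement implicitly asserts that the value is independent of the tree, which is exactly what the induction proves. Integrality of $t_k$ holds automatically because it counts private pairs, though one can also check directly that $23\mid 367-275=92$.
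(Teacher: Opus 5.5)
Your proof is correct and follows the paper's argument: your $\delta=o-(h+v)$ is exactly the paper's $\delta=2o-s$, and both proofs get $s_k$ and $o_k$ from the multiplicativity of $s$ and $\delta$ under \eqref{eq:s-product} and \eqref{eq:o-product}. The only difference is cosmetic and lies in the $t$ step: you note that $t=\tfrac{4}{23}(h+v)$ is preserved because $t$ and $h+v$ obey the same recursion against $o$, whereas the paper substitutes the closed forms directly into $t_{i+j}=t_io_j+o_it_j$.
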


\begin{proof}
Let $\delta=2o-s$.  Equations
\eqref{eq:s-product} and \eqref{eq:o-product} give
\[
s_{12}=s_1s_2,\qquad \delta_{12}=\delta_1\delta_2.
\]
At the base level, $(s,\delta)=(367,275)$.  Hence
$s_k=367^k$, $\delta_k=275^k$, and the formula for $o_k$ follows.

The formula for $t_k$ holds for $k=1$.  If it holds for $i$ and $j$,
then \eqref{eq:t-product} and
$o_\ell=(367^\ell+275^\ell)/2$ give
\[
\begin{split}
t_{i+j}
&=t_io_j+o_it_j\\
&=\frac{2(367^{i+j}-275^{i+j})}{23}.
\end{split}
\]
This proves \eqref{eq:closed-forms}.
\end{proof}

Let $M_1=367$.  For $k\geq2$, define
\begin{equation}\label{eq:DP}
M_k=\max_{\substack{i+j=k\\i,j\geq1}}
\left[
(M_i-t_i)(M_j-t_j)+t_is_j+s_it_j
\right],
\end{equation}
where $s_\ell,t_\ell$ are given by
\eqref{eq:closed-forms}.

\begin{proposition}\label{prop:Mk}
For every $k\geq1$, there is a gadget in $C_7^{5k}$ whose code
has size $M_k$ and whose values of $s,o,t$ are
$s_k,o_k,t_k$ as given by \eqref{eq:closed-forms}.
\end{proposition}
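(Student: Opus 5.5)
We argue by strong induction on $k$. The inductive hypothesis for each $\ell<k$ is the existence of a gadget $\mathcal G^{(\ell)}$ in $C_7^{5\ell}$ with code size $M_\ell$, built from $\ell$ copies of the base gadget by repeated applications of Lemma~\ref{lem:product}. Including the phrase ``built from $\ell$ copies'' in the hypothesis is what lets us apply Lemma~\ref{lem:closed-forms}. That lemma then says the $s,o,t$ values of $\mathcal G^{(\ell)}$ are $s_\ell,o_\ell,t_\ell$, whatever product tree was used.

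For the base case $k=1$, we take the base gadget of Proposition~\ref{prop:base}. It has $a=367=M_1$ and $(s,o,t)=(367,321,8)$. A direct check gives $367=s_1$, $(367+275)/2=321=o_1$ and $2(367-275)/23=8=t_1$.

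For the inductive step, let $k\ge2$ and choose $i,j\ge1$ with $i+j=k$ attaining the maximum in \eqref{eq:DP}. The inductive hypothesis gives gadgets $\mathcal G^{(i)}$ in $C_7^{5i}$ and $\mathcal G^{(j)}$ in $C_7^{5j}$, with code sizes $M_i,M_j$ and parameters $s_i,o_i,t_i$ and $s_j,o_j,t_j$. Apply Lemma~\ref{lem:product} with $G_1=C_7^{5i}$ and $G_2=C_7^{5j}$, using the identification $C_7^{5i}\boxt C_7^{5j}=C_7^{5k}$. This identification holds because the strong product is associative and closed adjacency is coordinatewise, as in \eqref{eq:coordinate-confusability}.

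The lemma produces a gadget $\mathcal G_{12}$ in $C_7^{5k}$. By \eqref{eq:a-product} its code has size
\[
(M_i-t_i)(M_j-t_j)+t_is_j+s_it_j=M_k.
\]
The gadget $\mathcal G_{12}$ is built from $i+j=k$ copies of the base gadget, so Lemma~\ref{lem:closed-forms} gives $(s,o,t)=(s_k,o_k,t_k)$. This can also be checked directly from \eqref{eq:t-product}--\eqref{eq:o-product} together with the closed forms.

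The only delicate point is bookkeeping. The hypothesis must record that each gadget arises from copies of the base gadget via Lemma~\ref{lem:product}, not merely that a gadget with code size $M_\ell$ exists. Only this makes the parameters used in \eqref{eq:DP} the true parameters of the gadgets being multiplied. The values $h$ and $v$ may vary with the product tree, but \eqref{eq:a-product} and \eqref{eq:t-product} involve only $a,t,s,o$, so they never matter here.
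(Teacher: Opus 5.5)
Your proof is correct and matches the paper's: induction on $k$, with the base gadget for $k=1$ and, for $k\ge2$, Lemma~\ref{lem:product} applied to the inductively built gadgets at a split attaining the maximum in \eqref{eq:DP}. Your requirement that the inductive hypothesis record that each gadget is built from copies of the base gadget (so that Lemma~\ref{lem:closed-forms} applies) makes explicit what the paper leaves implicit.
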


\begin{proof}
Induct on $k$.  The case $k=1$ is
Proposition~\ref{prop:base}.  For $k\geq2$, choose a split $i+j=k$
attaining the finite maximum in \eqref{eq:DP}, and apply
Lemma~\ref{lem:product} to the inductively constructed gadgets of
sizes $i$ and $j$.
\end{proof}

The following split tree produces $M_{40}$:
\[
1+1=2,\quad
1+2=3,\quad
2+3=5,\quad
5+5=10,\quad
10+10=20,\quad
20+20=40.
\]
For every $2\leq k\leq40$, the accompanying verification program
enumerates every split
\[
k=i+(k-i),\qquad 1\leq i\leq\left\lfloor\frac{k}{2}\right\rfloor,
\]
and evaluates \eqref{eq:DP} using exact integer arithmetic.  It
verifies that the displayed split tree attains the resulting maximum
at each of its nodes.  This exact computation gives
\[
\resizebox{0.97\textwidth}{!}{$
M_{40}=
4085567963379990282748597333793399773399910167372462112610203714626938509773125898252337545387866277249.
$}
\]
Proposition~\ref{prop:Mk} therefore proves the first assertion of
Theorem~\ref{thm:main}.  The lower bound on $\Theta(C_7)$ follows
immediately from its definition.

\section{Continuing balanced products indefinitely}

It is natural to ask whether repeatedly combining two identical
gadgets gives a still better asymptotic bound.  To keep this particular
balanced construction distinct from the dynamic-programming optimum
$M_k$, let $\widehat M_\ell$ denote the code size obtained after
$\ell$ balanced product steps.  Thus $\widehat M_0=367$, and the
number of five-dimensional base blocks at level $\ell$ is
$k_\ell=2^\ell$.  Put
\[
U_\ell=\frac{\widehat M_\ell}{367^{k_\ell}},
\qquad
T_\ell=\frac{t_{k_\ell}}{367^{k_\ell}}.
\]
By Lemma~\ref{lem:product} and \eqref{eq:closed-forms},
\[
T_\ell=
\frac{2}{23}
\left(1-\left(\frac{275}{367}\right)^{2^\ell}\right)
\]
and
\begin{equation}\label{eq:balanced-recurrence}
U_{\ell+1}=(U_\ell-T_\ell)^2+2T_\ell,
\qquad U_0=1.
\end{equation}

The resulting sequence of balanced root bounds has a limit.  Indeed,
set
\[
F_\ell=
\left(1-\frac{T_\ell}{U_\ell}\right)^2
+\frac{2T_\ell}{U_\ell^2},
\]
so that $U_{\ell+1}=U_\ell^2F_\ell$.  Since
$0\leq T_\ell\leq2/23$, recurrence
\eqref{eq:balanced-recurrence} shows inductively that $U_\ell\geq1$.
Consequently,
\[
\left(1-\frac{2}{23}\right)^2
\leq F_\ell\leq1+\frac{4}{23}.
\]
The sequence
\[
\frac{\log U_\ell}{2^\ell}
\]
therefore converges, because its successive increments are
$2^{-\ell-1}\log F_\ell$ and the latter form an absolutely convergent
series.  It follows that the balanced root bounds also converge.
Evaluation of \eqref{eq:balanced-recurrence} gives
\[
\lim_{\ell\to\infty}
\widehat M_\ell^{1/(5\cdot2^\ell)}
\approx
3.2586163193818009407725377551.
\]
This is smaller than the bound in Theorem~\ref{thm:main}.  Moreover,
there is no gain from taking the $200$-dimensional gadget of
Theorem~\ref{thm:main} and then repeatedly combining it with an
identical copy.  For $m\geq0$, let $\widetilde M_m$ denote the code
size after $m$ such products, and define
\[
\widetilde k_m=40\cdot2^m,\qquad
\widetilde U_m=\frac{\widetilde M_m}{367^{\widetilde k_m}},
\qquad
\widetilde T_m=\frac{t_{\widetilde k_m}}
                     {367^{\widetilde k_m}},
\]
where $\widetilde M_0=M_{40}$.  The normalized code sizes satisfy
\[
\widetilde U_{m+1}
=\bigl(\widetilde U_m-\widetilde T_m\bigr)^2
  2\widetilde T_m.
\]
Define the corresponding root bound by
\[
\widetilde\rho_m
=\widetilde M_m^{1/(5\widetilde k_m)}
=367^{1/5}\widetilde U_m^{1/(5\widetilde k_m)},
\]
and put $\tau=2/23$.  Exact integer arithmetic gives the following
strict inequality:
\[
\widetilde U_0
=1.058302615128940076\ldots>1+\frac{\tau}{2}.
\]
For every $m\geq0$, the value of $\widetilde T_m$ lies between
$\tau/2$ and $\tau$, since $(275/367)^{40}<1/2$.  If
$\widetilde U_m\geq1+\tau/2$, then the recurrence preserves this
inequality: its right-hand side is increasing in $\widetilde U_m$,
and at $\widetilde U_m=1+\tau/2$ its minimum over
$\widetilde T_m\in[\tau/2,\tau]$ is $1+\tau$.  Moreover,
\[
\widetilde U_{m+1}-\widetilde U_m^2
=\widetilde T_m
 \bigl(2-2\widetilde U_m+\widetilde T_m\bigr)
\leq0.
\]
It follows that
$\widetilde\rho_{m+1}\leq\widetilde\rho_m$ for every $m\geq0$.
For $m=1$, the construction has $80$ base blocks, dimension $400$,
and root bound
\[
3.258770378400339734\ldots
<3.258789153908691016\ldots .
\]
Thus neither the balanced construction from the base gadget nor
continued balanced products starting from the $200$-dimensional
gadget improve Theorem~\ref{thm:main}.

\section{Verification}\label{sec:verification}

The source code and input data are available on
GitHub~\cite{verification2026}.  To make the computation reproducible,
we refer to the fixed commit
\texttt{b13031ba76e3}.  With Python~3.11 or later, the verification is
run from the repository root by
\begin{verbatim}
python verify_from_c5_phase_recursion.py
\end{verbatim}
The program reads \texttt{inputs/R367.txt}, checks
Proposition~\ref{prop:base}, and evaluates the recursion using exact
integer arithmetic.  In particular, for every $2\leq k\leq40$, it
enumerates all splits in \eqref{eq:DP} and reproduces the integer and
the lower bound in Theorem~\ref{thm:main}.  The finite claims of the
theorem depend only on these exact checks, not on the exploratory
floating-point scan also provided by the program.

\section*{Statement on the use of large language models}

During the development of this work, ChatGPT 5.6 Sol implemented all
the code and expanded the proofs.

\end{document}